\documentclass[11pt]{article}

\usepackage[T1]{fontenc}
\usepackage[a4paper,margin=1in]{geometry}
\usepackage{amsmath,amssymb,mathtools}
\usepackage{amsthm}
\usepackage{newtxtext,newtxmath}
\usepackage{microtype}
\usepackage[numbers,sort&compress]{natbib}
\usepackage[colorlinks=true,linkcolor=black,citecolor=black,urlcolor=black]{hyperref}
\usepackage[nameinlink,capitalize,noabbrev]{cleveref}

\makeatletter
\g@addto@macro\UrlBreaks{\do\0\do\1\do\2\do\3\do\4\do\5\do\6\do\7\do\8\do\9}
\makeatother

\allowdisplaybreaks
\numberwithin{equation}{section}
\newtheorem{theorem}{Theorem}[section]
\newtheorem{proposition}[theorem]{Proposition}
\newtheorem{lemma}[theorem]{Lemma}
\newtheorem{corollary}[theorem]{Corollary}
\theoremstyle{definition}
\newtheorem{definition}[theorem]{Definition}
\theoremstyle{remark}
\newtheorem{remark}[theorem]{Remark}

\crefname{theorem}{Theorem}{Theorems}
\crefname{lemma}{Lemma}{Lemmas}
\crefname{proposition}{Proposition}{Propositions}
\crefname{corollary}{Corollary}{Corollaries}
\crefname{definition}{Definition}{Definitions}
\crefname{remark}{Remark}{Remarks}
\crefname{equation}{equation}{equations}
\crefname{section}{Section}{Sections}
\crefname{appendix}{Appendix}{Appendices}

\newcommand{\R}{\mathbb R}
\newcommand{\cS}{\mathcal S}
\newcommand{\Om}{\Omega}
\newcommand{\norm}[1]{\lVert #1\rVert}
\newcommand{\doi}[1]{\href{https://doi.org/#1}{doi:\nolinkurl{#1}}}

\hypersetup{
  pdftitle={R-Linear Convergence of Barzilai--Borwein Methods on Strictly Convex Quadratics via Closed-Cone Homogeneous Dynamics},
  pdfauthor={Shutai Yang and Ya-xiang Yuan},
  pdfkeywords={Barzilai--Borwein method, spectral gradient method, positively homogeneous dynamics, closed cone, R-linear convergence}
}

\begin{document}

\title{$R$-Linear Convergence of Barzilai--Borwein Methods on Strictly Convex Quadratics via Closed-Cone Homogeneous Dynamics}

\author{Shutai Yang\textsuperscript{1,2}\qquad
Ya-xiang Yuan\textsuperscript{1}\\[0.6em]
\small\textsuperscript{1}State Key Laboratory of Scientific and Engineering Computing,\\
\small Academy of Mathematics and Systems Science, Chinese Academy of Sciences,\\
\small and University of Chinese Academy of Sciences, Beijing, China\\
\small\textsuperscript{2}School of Mathematical Sciences, University of Science and Technology of China, Hefei, China\\[0.3em]
\small Shutai Yang: \texttt{yangshutai26@mails.ucas.ac.cn}\\
\small Ya-xiang Yuan: \texttt{yyx@lsec.cc.ac.cn}}

\date{}

\maketitle

\begin{abstract}
We give an $R$-linear convergence proof for BB1, BB2, and fixed
positive spectral-weight variants on finite-dimensional strictly
convex quadratics.  The delayed recurrence is written as a first-order
system on the closed cone of compatible consecutive-gradient pairs.
Its transition is continuous and positively homogeneous of degree
one, including at finite termination.  A compactness theorem shows
that pointwise convergence of every cone orbit implies a uniform
finite-step contraction and hence a global $R$-linear estimate.  By a
realization lemma, every compatible state initiates a BB1 trajectory,
so Raydan's theorem yields pointwise stability on the entire cone.
Finally, the
transformation $g\mapsto W^{1/2}g$ conjugates every fixed positive
spectral-weight rule to BB1.  Consequently, BB1, BB2, and all such
weighted rules have the same homogeneous growth radius.
\end{abstract}

\noindent\textbf{Keywords.}
Barzilai--Borwein method; spectral gradient method; positively
homogeneous dynamics; closed cone; $R$-linear convergence.

\medskip
\noindent\textbf{Mathematics Subject Classification (2020).}
90C25; 65K05; 37C75.

\section{Introduction}\label{sec:introduction}

Consider the strictly convex quadratic problem
\begin{equation}\label{eq:intro-problem}
  \min_{x\in\R^n} f(x)
  =\frac12x^THx-b^Tx,
  \qquad H=H^T\succ0,
\end{equation}
with minimizer $x_*=H^{-1}b$ and gradients
$g_k=H(x_k-x_*)$.  A gradient step satisfies
\begin{equation}\label{eq:intro-gradient-update}
  x_{k+1}=x_k-\alpha_k g_k,
  \qquad
  g_{k+1}=(I-\alpha_kH)g_k.
\end{equation}
The two classical Barzilai--Borwein (BB) rules choose, on a quadratic,
\begin{equation}\label{eq:intro-BB-steps}
  \alpha_k^{\rm BB1}
  =\frac{g_{k-1}^Tg_{k-1}}{g_{k-1}^THg_{k-1}},
  \qquad
  \alpha_k^{\rm BB2}
  =\frac{g_{k-1}^THg_{k-1}}{g_{k-1}^TH^2g_{k-1}}.
\end{equation}
Their defining feature is the one-step delay: the stepsize applied to
$g_k$ is determined by $g_{k-1}$.

Barzilai and Borwein introduced these rules and analyzed their
two-dimensional behavior \cite{BarzilaiBorwein1988}. Raydan proved
global convergence of both choices on finite-dimensional strictly
convex quadratics. Dai and Liao established $R$-linear convergence in
arbitrary dimension for BB1 and recorded the analogous BB2 conclusion
\cite{Raydan1993,DaiLiao2002}. The latter proof strengthens
a spectral-component induction into a uniform finite-window estimate.
Related coordinatewise mechanisms underlie the bounded-retard
framework of Friedlander, Mart\'inez, Molina, and Raydan and the
Property~A analysis of Dai
\cite{FriedlanderMartinezMolinaRaydan1999,Dai2003}.

In this paper, the rate is obtained from the geometry of an augmented state.  The state is a pair of consecutive gradients.  Its realizable values form the cone
\[
  (u,v),\qquad
  v=(I-\alpha H)u,\qquad
  \alpha\in[\lambda_{\max}(H)^{-1},
            \lambda_{\min}(H)^{-1}].
\]
The cone is closed and forward invariant, and its only state with
first component zero is the origin.  On this cone the delayed BB
update is continuous, including at finite termination, and positively
homogeneous of degree one.

The abstract step is a finite-dimensional compactness principle:
pointwise convergence of every orbit of a continuous homogeneous
self-map on a closed cone is equivalent to a uniform $R$-linear
estimate.  Related stability principles for homogeneous systems are
studied in \cite{Tuna2008,ShenHu2012}.  A realization lemma makes
Raydan's theorem applicable from every compatible state, providing the
pointwise input for BB1.  For a fixed positive spectral weight
$W=\omega(H)$, the map $(u,v)\mapsto(W^{1/2}u,W^{1/2}v)$ conjugates the
weighted pair dynamics to BB1.  This transfers one decay factor to
BB2 and to every fixed weighted rule and identifies their homogeneous
growth radii.

\Cref{sec:homogeneous} proves the compactness principle and records its
growth radius.  \Cref{sec:weighted-BB} constructs the compatible cone,
establishes the spectral conjugacy, and derives estimates for gradient
pairs, iterates, and objective gaps.  A complementary sharp-factor
analysis appears in \cite{YangYuanSharp2026}.

\section{Homogeneous dynamics on closed cones}\label{sec:homogeneous}

Let $X$ be a finite-dimensional normed space.  A nonempty set
$\Om\subset X$ is called a \emph{closed cone} if it is closed,
$0\in\Om$, and
\[
  x\in\Om,\quad t\ge0
  \quad\Longrightarrow\quad
  tx\in\Om.
\]
A map $T:\Om\to\Om$ is positively
homogeneous of degree one if
\begin{equation}\label{eq:positive-homogeneity}
  T(tx)=tT(x),
  \qquad t\ge0,\quad x\in\Om.
\end{equation}
Positive homogeneity implies $T(0)=0$.

\begin{definition}\label{def:Rlinear-map}
The origin is \emph{globally $R$-linearly stable} for $T$ on $\Om$ if
there exist $C\ge1$ and $\rho\in(0,1)$ such that
\begin{equation}\label{eq:Rlinear-map}
  \norm{T^k(x)}
  \le C\rho^k\norm{x},
  \qquad x\in\Om,\quad k\ge0.
\end{equation}
\end{definition}

\begin{theorem}\label{thm:closed-cone}
Let $\Om$ be a closed cone in $X$, and let $T:\Om\to\Om$ be continuous
and positively homogeneous of degree one.  The following statements
are equivalent.
\begin{enumerate}
\item[\textnormal{(i)}]
For every $x\in\Om$, one has $T^k(x)\to0$.
\item[\textnormal{(ii)}]
There exist $C\ge1$ and $\rho\in(0,1)$ for which
\eqref{eq:Rlinear-map} holds.
\item[\textnormal{(iii)}]
There exist an integer $N\ge1$ and $\theta\in(0,1)$ such that
\begin{equation}\label{eq:fixed-horizon-contraction}
  \norm{T^N(x)}\le\theta\norm{x},
  \qquad x\in\Om.
\end{equation}
\end{enumerate}
\end{theorem}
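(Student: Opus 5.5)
We prove the cycle (ii)$\Rightarrow$(i)$\Rightarrow$(iii)$\Rightarrow$(ii). The implication (ii)$\Rightarrow$(i) is immediate, since $C\rho^k\norm{x}\to0$.

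For (iii)$\Rightarrow$(ii), we first bound the intermediate powers. By homogeneity and continuity, each $T^j$ with $0\le j<N$ is bounded on the compact set $S=\{x\in\Om:\norm{x}=1\}$. Put
\[
  M=\max_{0\le j<N}\sup_{x\in S}\norm{T^j(x)}<\infty,
\]
with $M\ge1$ because $j=0$ is included. Homogeneity then gives $\norm{T^j(x)}\le M\norm{x}$ for all $x\in\Om$ and $0\le j<N$; the case $x=0$ is trivial since $T(0)=0$. Write $k=qN+j$ with $0\le j<N$. Since $T^{qN}(x)\in\Om$, iterating \eqref{eq:fixed-horizon-contraction} gives
\[
  \norm{T^k(x)}\le M\theta^q\norm{x}\le M\theta^{-1}\bigl(\theta^{1/N}\bigr)^k\norm{x}.
\]
Hence (ii) holds with $\rho=\theta^{1/N}$ and $C=M/\theta$.

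The main step is (i)$\Rightarrow$(iii), which is a compactness argument. Because $X$ is finite dimensional and $\Om$ is closed, $S$ is compact (if $S$ is empty, then $\Om=\{0\}$ and everything is trivial). For each $x\in S$, assumption (i) gives an index $k_x$ with $\norm{T^{k_x}(x)}<1/2$. Continuity of $T^{k_x}$ provides a relatively open neighbourhood $U_x$ of $x$ in $S$ on which $\norm{T^{k_x}(y)}<1/2$. Finitely many of these, say $U_{x_1},\dots,U_{x_m}$, cover $S$; let $K=\max_i k_{x_i}$.

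The obstacle is that these covering times differ, so a single horizon cannot be read off directly. We resolve this with a chaining argument. Starting from $y\in S$, apply some $T^{k_i}$ with $k_i\le K$ to reach norm below $1/2$. Rescale by homogeneity and repeat, treating a zero iterate as trivially absorbed since $T(0)=0$. After $r$ stages the accumulated time lies between $r$ and $rK$, and the norm is below $2^{-r}$.

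To land at one fixed horizon $N$, we control the leftover steps using the bound $M_K=\max_{0\le j\le K}\sup_{S}\norm{T^j}$, which is finite by compactness and homogeneity. Choose $r$ with $M_K2^{-r}\le1/2$ and set $N=rK$. For any $x\in S$, run stages until the accumulated time $t$ first satisfies $t\ge N-K$. Then $t\le N$ and at least $(N-K)/K=r-1$ stages have been completed, so $\norm{T^t(x)}<2^{-(r-1)}$. Applying the remaining $N-t\le K$ steps gives
\[
  \norm{T^N(x)}\le M_K\,2^{-(r-1)}.
\]
Choosing $r$ with $M_K2^{-(r-1)}\le 1/2$ yields $\norm{T^N(x)}\le\tfrac12$ on $S$. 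Homogeneity extends this to all of $\Om$, giving (iii) with $\theta=\tfrac12$.
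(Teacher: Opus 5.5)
Your proof is correct and follows essentially the same route as the paper: you cover the compact unit section by finitely many neighbourhoods with variable contraction times, chain these greedily using positive homogeneity, and control the leftover powers by a compactness constant. The only difference is bookkeeping: you use the chain to land at one fixed horizon $N=rK$, proving (i)$\Rightarrow$(iii) and then (iii)$\Rightarrow$(ii), whereas the paper reads off the $R$-linear bound for every $k$ directly from the chain and obtains (iii) from (ii). Your first choice ``$M_K2^{-r}\le1/2$'' should be $M_K2^{-(r-1)}\le1/2$, as you later state; this forces $r\ge2$, so at least one stage is completed and the strict bound holds.
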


\begin{proof}
Suppose \textnormal{(iii)} holds and set
\[
  \Sigma:=\{x\in\Om:\norm{x}=1\}.
\]
If $\Om=\{0\}$, there is nothing to prove.  Otherwise $\Sigma$ is
compact.  Continuity and positive homogeneity give
\[
  L_N:=
  \max_{0\le j<N}\ \max_{y\in\Sigma}\norm{T^j(y)}<\infty.
\]
Write $k=qN+r$, with $0\le r<N$.  Iterating
\eqref{eq:fixed-horizon-contraction} yields
\[
  \norm{T^k(x)}
  \le L_N\theta^q\norm{x}
  \le \frac{L_N}{\theta}
       \bigl(\theta^{1/N}\bigr)^k\norm{x}.
\]
Thus \textnormal{(ii)} holds.  The implication
\textnormal{(ii)}$\Rightarrow$\textnormal{(i)} is immediate.

It remains to prove
\textnormal{(i)}$\Rightarrow$\textnormal{(ii)}.  Fix
$\eta\in(0,1)$.  For each $x\in\Sigma$, pointwise convergence gives an
integer $n_x\ge1$ such that
\[
  \norm{T^{n_x}(x)}<\eta.
\]
By continuity of $T^{n_x}$, there is a relative neighborhood $U_x$
of $x$ in $\Sigma$ on which
\[
  \norm{T^{n_x}(y)}\le\eta.
\]
Choose a finite subcover
$U_{x_1},\ldots,U_{x_s}$ and set
\[
  N_0:=\max_{1\le i\le s}n_{x_i},
  \qquad
  L:=\max_{0\le j\le N_0}\ \max_{y\in\Sigma}
       \norm{T^j(y)}.
\]
For every $y\in\Sigma$, choose one
$m(y)\in\{1,\ldots,N_0\}$ such that
\begin{equation}\label{eq:variable-contraction}
  \norm{T^{m(y)}(y)}\le\eta.
\end{equation}

Fix $x\in\Om\setminus\{0\}$.  Set $z_0=x$ and $s_0=0$.  As long as
$z_q\ne0$, define
\[
  m_q:=m\left(\frac{z_q}{\norm{z_q}}\right),
  \qquad
  s_{q+1}:=s_q+m_q,
  \qquad
  z_{q+1}:=T^{m_q}(z_q).
\]
Positive homogeneity and \eqref{eq:variable-contraction} give
\begin{equation}\label{eq:greedy-decay}
  \norm{z_{q+1}}\le\eta\norm{z_q},
  \qquad
  \norm{z_q}\le\eta^q\norm{x}.
\end{equation}
If some $z_q$ vanishes, all later iterates vanish.  Otherwise
$s_q\ge q$ and $s_q\to\infty$.  For every iterate that has not already
vanished, choose the unique $q$ such that
$s_q\le k<s_{q+1}$.  Since $k-s_q\le N_0$,
\[
  \norm{T^k(x)}
  =\norm{T^{k-s_q}(z_q)}
  \le L\norm{z_q}
  \le L\eta^q\norm{x}.
\]
Moreover,
\[
  k<s_{q+1}\le(q+1)N_0,
  \qquad\text{so}\qquad
  q>\frac{k}{N_0}-1.
\]
For every $k\ge1$, consequently,
\begin{equation}\label{eq:compactness-rate}
  \norm{T^k(x)}
  \le\frac{L}{\eta}
       \bigl(\eta^{1/N_0}\bigr)^k\norm{x}.
\end{equation}
The estimate is trivial after finite termination.  Enlarging the
prefactor to at least one also covers $k=0$, proving
\textnormal{(ii)}.

Finally, if \textnormal{(ii)} holds, choose $N$ large enough that
$C\rho^N<1$ and set $\theta=C\rho^N$.  This proves
\textnormal{(ii)}$\Rightarrow$\textnormal{(iii)} and completes the
equivalence.
\end{proof}

\subsection{The homogeneous growth radius}\label{subsec:growth-radius}

For a nontrivial closed cone and a continuous positively homogeneous
self-map, set
\[
  \Sigma_\Om:=\{x\in\Om:\norm{x}=1\}
\]
and define
\begin{equation}\label{eq:ak-definition}
  a_k(T):=\max_{x\in\Sigma_\Om}\norm{T^k(x)},
  \qquad k\ge0.
\end{equation}
The maximum exists by compactness, and positive homogeneity gives
\begin{equation}\label{eq:submultiplicative-ak}
  a_{k+\ell}(T)\le a_k(T)a_\ell(T).
\end{equation}
Indeed, for $x\in\Sigma_\Om$ with $T^\ell(x)\ne0$, normalize
$T^\ell(x)$ and use homogeneity to obtain
\[
  \norm{T^{k+\ell}(x)}
  \le a_k(T)\norm{T^\ell(x)}
  \le a_k(T)a_\ell(T);
\]
the case $T^\ell(x)=0$ is immediate.
Hence the following limit exists:
\begin{equation}\label{eq:growth-radius}
  r_\Om(T):=
  \lim_{k\to\infty}a_k(T)^{1/k}
  =\inf_{k\ge1}a_k(T)^{1/k}.
\end{equation}

\begin{corollary}\label{cor:growth-radius}
Under the assumptions of \cref{thm:closed-cone}, pointwise convergence
is equivalent to $r_\Om(T)<1$. Whenever these equivalent stability
conditions hold, $r_\Om(T)$ is the infimum of all $\rho\in(0,1)$ for
which an estimate \eqref{eq:Rlinear-map} holds with some finite
prefactor $C$.
\end{corollary}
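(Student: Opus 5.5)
The plan is to relate $r_\Om(T)$ directly to the estimate \eqref{eq:Rlinear-map} in both directions, and then use \cref{thm:closed-cone} to pass to pointwise convergence. Throughout, the cone is assumed nontrivial, so $\Sigma_\Om$ is nonempty and compact. By homogeneity, \eqref{eq:Rlinear-map} holding with constants $(C,\rho)$ is the same as $a_k(T)\le C\rho^k$ for all $k\ge0$. This reformulation is what I will use.

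First, suppose (ii) of \cref{thm:closed-cone} holds with some $(C,\rho)$. Then $a_k(T)^{1/k}\le C^{1/k}\rho$. Letting $k\to\infty$ in \eqref{eq:growth-radius} gives $r_\Om(T)\le\rho<1$. The same computation shows that every admissible $\rho$ satisfies $r_\Om(T)\le\rho$. So $r_\Om(T)$ is a lower bound for the set of admissible rates.

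Conversely, suppose $r_\Om(T)<1$. Pick any $\rho$ with $r_\Om(T)<\rho<1$. Since the limit in \eqref{eq:growth-radius} exists, there is $K$ with $a_k(T)\le\rho^k$ for all $k\ge K$. For the finitely many $k<K$, the values $a_k(T)$ are finite. Set
\[
  C:=\max\Bigl\{1,\ \max_{0\le k<K}a_k(T)\rho^{-k}\Bigr\}.
\]
Then $a_k(T)\le C\rho^k$ for all $k$, which is \eqref{eq:Rlinear-map}. This gives (ii), and hence (i) by \cref{thm:closed-cone}. Combined with the first step, (i) is equivalent to $r_\Om(T)<1$.

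Under these conditions, every $\rho\in(r_\Om(T),1)$ is admissible, while no $\rho<r_\Om(T)$ is. The set of admissible rates therefore contains $(r_\Om(T),1)$ and is contained in $[r_\Om(T),1)$. Its infimum is $r_\Om(T)$. The edge case $r_\Om(T)=0$ is covered, since then every $\rho\in(0,1)$ is admissible and the infimum is $0$.

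No step is a real obstacle; the most delicate point is finiteness of the constant $C$ for the initial indices. This follows because each $a_k(T)$ is a maximum of a continuous function over the compact set $\Sigma_\Om$.
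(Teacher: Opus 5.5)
Your proof is correct and follows essentially the same route as the paper: you reduce \eqref{eq:Rlinear-map} to the bound $a_k(T)\le C\rho^k$, get $r_\Om(T)\le\rho$ from any admissible rate, and for each $\rho\in(r_\Om(T),1)$ produce a finite prefactor. The only difference is cosmetic. You use the limit in \eqref{eq:growth-radius} to get $a_k(T)\le\rho^k$ for all large $k$ and absorb the finitely many initial indices into $C$, whereas the paper takes a single $N$ with $a_N(T)<\rho^N$ from the infimum and propagates it via submultiplicativity \eqref{eq:submultiplicative-ak}.
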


\begin{proof}
Pointwise convergence implies \eqref{eq:Rlinear-map} by
\cref{thm:closed-cone}, hence
$a_k(T)\le C\rho^k$ and $r_\Om(T)\le\rho<1$.
Conversely, if $r_\Om(T)<1$, choose
$\rho\in(r_\Om(T),1)$.  By \eqref{eq:growth-radius}, there is $N$ for
which $a_N(T)<\rho^N$.  Writing $k=qN+r$ and using
\eqref{eq:submultiplicative-ak} gives
\[
  a_k(T)
  \le a_N(T)^q\max_{0\le j<N}a_j(T)
  \le C_\rho\rho^k
\]
for a finite $C_\rho$.  Positive homogeneity gives
\eqref{eq:Rlinear-map}.  The same argument proves that every
$\rho>r_\Om(T)$ is admissible, whereas any admissible rate bounds the
limit in \eqref{eq:growth-radius}.
\end{proof}

\begin{remark}\label{rem:finite-dimensionality}
The compact normalized section is essential.  On $\ell^2$, let
\[
  Te_j=\left(1-\frac1j\right)e_j.
\]
Then $T$ is linear and $T^kx\to0$ for every $x\in\ell^2$, but
$\norm{T^k}=1$ for every $k$; no uniform $R$-linear estimate holds.
Thus \cref{thm:closed-cone} does not extend to infinite-dimensional
spaces without an additional compactness or uniformity hypothesis.
\end{remark}

\section{BB dynamics on compatible gradient pairs}\label{sec:weighted-BB}

Translate the minimizer in \eqref{eq:intro-problem} to the origin and
write
\begin{equation}\label{eq:BB-quadratic}
  f(x)=\frac12x^THx,
  \qquad
  g=Hx,
  \qquad
  0<a:=\lambda_{\min}(H)\le
  \lambda_{\max}(H)=:b.
\end{equation}
A \emph{positive spectral weight} is
\[
  W=\omega(H),
  \qquad
  \omega(\lambda)>0
  \quad\text{for every }\lambda\in\sigma(H).
\]
Thus $W\succ0$ and $WH=HW$. Write
\[
  \kappa_2(W):=\norm{W}_2\norm{W^{-1}}_2.
\]
For $u\ne0$, define the delayed weighted
Rayleigh step
\begin{equation}\label{eq:weighted-Rayleigh-step}
  \alpha_W(u):=
  \frac{u^TWu}{u^TWHu}.
\end{equation}
The generalized Rayleigh quotient gives
\begin{equation}\label{eq:weighted-step-bounds}
  \frac1b\le\alpha_W(u)\le\frac1a.
\end{equation}
The choices $W=I$ and $W=H$ are BB1 and BB2, respectively.
Starting from an arbitrary positive finite first step $\alpha_0$, the
weighted delayed recurrence is
\begin{equation}\label{eq:weighted-BB-recurrence}
  g_{k+1}=(I-\alpha_kH)g_k,
  \qquad
  \alpha_k=\alpha_W(g_{k-1}),
  \qquad k\ge1,
\end{equation}
with the usual zero continuation after termination.

\subsection{The closed invariant cone}\label{subsec:BB-cone}

Let
\begin{equation}\label{eq:BB-cone}
  \Om_{\rm BB}(H):=
  \left\{(u,v)\in\R^n\times\R^n:
  v=(I-\alpha H)u
  \text{ for some }\alpha\in[b^{-1},a^{-1}]
  \right\}
\end{equation}
and equip the product space with
\[
  \norm{(u,v)}_\times
  :=\bigl(\norm{u}^2+\norm{v}^2\bigr)^{1/2}.
\]
Define
\begin{equation}\label{eq:BB-map}
  T_W(u,v):=
  \begin{cases}
  \bigl(v,(I-\alpha_W(u)H)v\bigr),&u\ne0,\\[0.2em]
  (0,0),&(u,v)=(0,0).
  \end{cases}
\end{equation}
There is no other state with $u=0$ in \eqref{eq:BB-cone}.

\begin{proposition}\label{prop:BB-map}
The set $\Om_{\rm BB}(H)$ is a closed cone.  For every fixed positive
spectral weight $W$, the transition
\[
  T_W:\Om_{\rm BB}(H)\longrightarrow\Om_{\rm BB}(H)
\]
is a continuous positively homogeneous self-map.
\end{proposition}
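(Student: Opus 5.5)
We verify in turn that $\Om_{\rm BB}(H)$ is a closed cone, that $T_W$ maps it into itself, that $T_W$ is positively homogeneous, and that $T_W$ is continuous. The last point, continuity at the origin, is the only delicate one.

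\emph{Cone property.} The pair $(0,0)$ lies in $\Om_{\rm BB}(H)$ (take any $\alpha$). Scaling is immediate: if $v=(I-\alpha H)u$, then $tv=(I-\alpha H)(tu)$ with the same $\alpha$.

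\emph{Closedness.} Take $(u_j,v_j)\to(u,v)$ with $v_j=(I-\alpha_jH)u_j$ and $\alpha_j\in[b^{-1},a^{-1}]$. This interval is compact, so we pass to a subsequence $\alpha_j\to\alpha\in[b^{-1},a^{-1}]$. Taking limits in the linear relation gives $v=(I-\alpha H)u$. We also record the observation stated before the proposition. If $u=0$, then $v=0$. Moreover, since $\norm{I-\alpha H}_2\le\max(|1-\alpha a|,|1-\alpha b|)\le 1-a/b\le 1$ for $\alpha$ in this interval, every state in the cone satisfies $\norm{v}\le\norm{u}$.

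\emph{Invariance.} Let $(u,v)\in\Om_{\rm BB}(H)$ with $u\neq0$. Put $\beta=\alpha_W(u)$ whenever $u^TWHu>0$. This denominator is positive because $W^{1/2}HW^{1/2}\succ0$, as $W$ and $H$ commute and are both positive definite. The bound \eqref{eq:weighted-step-bounds} gives $\beta\in[b^{-1},a^{-1}]$. The new state $(v,(I-\beta H)v)$ therefore has exactly the compatible form \eqref{eq:BB-cone}, with the roles of $u$ and $v$ shifted. The case $(0,0)$ maps to $(0,0)$, which lies in the cone.

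\emph{Homogeneity.} The step $\alpha_W$ is invariant under $u\mapsto tu$ for $t>0$, being a ratio of two quadratic forms. Hence $T_W(tu,tv)=tT_W(u,v)$ for $t>0$. The case $t=0$ follows from $T_W(0,0)=(0,0)$.

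\emph{Continuity.} Away from $u=0$, the map $u\mapsto\alpha_W(u)$ is a quotient of continuous functions with nonvanishing denominator. So $T_W$ is continuous on the part of the cone with $u\neq0$. By the closedness step, that part is everything except the origin. The main point is continuity at $(0,0)$, where $\alpha_W(u)$ has no limit. Here we use the uniform bound $\norm{I-\alpha_W(u)H}_2\le1$, which holds because $\alpha_W(u)\in[b^{-1},a^{-1}]$. It gives
\[
\norm{T_W(u,v)}_\times^2=\norm{v}^2+\norm{(I-\alpha_W(u)H)v}^2\le2\norm{v}^2\le2\norm{(u,v)}_\times^2 .
\]
So $T_W(u,v)\to0$ as $(u,v)\to0$ within the cone, which is continuity at the origin. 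This covers finite termination: states approaching $(0,0)$ are mapped near $(0,0)$. The argument uses that $(0,0)$ is the only state in the cone with $u=0$; otherwise states with $u=0$, $v\ne0$ would be discontinuity points of $T_W$.
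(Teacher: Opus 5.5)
Your argument follows the paper's proof step for step: compactness of $[b^{-1},a^{-1}]$ for closedness, the step bounds \eqref{eq:weighted-step-bounds} for invariance, scale invariance of $\alpha_W$ for homogeneity, and a uniform operator-norm bound for continuity at the origin. However, the norm bound you use is false. For $\alpha\in[b^{-1},a^{-1}]$ the eigenvalues of $I-\alpha H$ are $1-\alpha\lambda$ with $\lambda\in\sigma(H)\subset[a,b]$. The term $|1-\alpha b|=\alpha b-1$ reaches $b/a-1$ at $\alpha=a^{-1}$, so the correct bound is $\norm{I-\alpha H}_2\le\max(1-a/b,\,b/a-1)=b/a-1$. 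This exceeds $1$ as soon as $b>2a$. For instance, take $H=\mathrm{diag}(1,10)$, $u=e_2$ and $\alpha=1=a^{-1}$: then $v=-9e_2$ is a compatible state with $\norm{v}=9\norm{u}$. Consequently three of your assertions fail in general: that every cone state satisfies $\norm{v}\le\norm{u}$, that $\norm{I-\alpha_W(u)H}_2\le1$, and that $\norm{T_W(u,v)}_\times^2\le2\norm{v}^2$. This overshoot is exactly the nonmonotonicity of BB steps.

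The repair is immediate and is what the paper does. Continuity at the origin needs only some finite uniform constant, namely $M_H:=\max_{\alpha\in[b^{-1},a^{-1}]}\norm{I-\alpha H}_2<\infty$, which here equals $b/a-1$. This gives $\norm{T_W(u,v)}_\times\le(1+M_H^2)^{1/2}\norm{(u,v)}_\times$ on the cone, hence $T_W(u,v)\to(0,0)$ as $(u,v)\to(0,0)$. With $M_H$ in place of $1$, and the unused claim $\norm{v}\le\norm{u}$ deleted, your proof is correct.
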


\begin{proof}
Closure under nonnegative scaling follows immediately from
\eqref{eq:BB-cone}.  Let
$(u^\ell,v^\ell)\to(u,v)$ with
$(u^\ell,v^\ell)\in\Om_{\rm BB}(H)$.  Choose
$\alpha_\ell\in[b^{-1},a^{-1}]$ such that
\[
  v^\ell=(I-\alpha_\ell H)u^\ell.
\]
The interval is compact, so a subsequence satisfies
$\alpha_\ell\to\alpha\in[b^{-1},a^{-1}]$.  Passing to the limit gives
$v=(I-\alpha H)u$, and the cone is closed.

If $(u,v)\in\Om_{\rm BB}(H)$ and $u\ne0$, then
\eqref{eq:weighted-step-bounds} shows that
\[
  (I-\alpha_W(u)H)v
\]
is a legal one-step update of $v$.  Hence $T_W(u,v)$ belongs to the
same cone.  The zero state maps to itself.

For $t>0$, \eqref{eq:weighted-Rayleigh-step} gives
$\alpha_W(tu)=\alpha_W(u)$.  Thus
\[
  T_W(tu,tv)=tT_W(u,v).
\]
The identity also holds at $t=0$.  Continuity away from the origin is
immediate.  At the origin, set
\[
  M_H:=\max_{\alpha\in[b^{-1},a^{-1}]}
       \norm{I-\alpha H}_2<\infty.
\]
For every nonzero compatible state,
\[
  \norm{(I-\alpha_W(u)H)v}
  \le M_H\norm{v}.
\]
Therefore $T_W(u,v)\to(0,0)$ whenever
$(u,v)\to(0,0)$ within the cone.  This proves continuity.
\end{proof}

\subsection{Spectral conjugacy}\label{subsec:BB-conjugacy}

Set $S=W^{1/2}$ and define
\begin{equation}\label{eq:BB-product-conjugacy}
  \cS_W(u,v):=(Su,Sv).
\end{equation}
Since $S$ commutes with $H$, $\cS_W$ maps $\Om_{\rm BB}(H)$ onto itself.

\begin{proposition}\label{prop:BB-conjugacy}
Let $T_I$ denote the BB1 transition.  Then
\begin{equation}\label{eq:BB-conjugacy}
  \cS_WT_W=T_I\cS_W,
  \qquad
  T_W=\cS_W^{-1}T_I\cS_W.
\end{equation}
In particular,
\begin{equation}\label{eq:BB-growth-radius-conjugate}
  r_{\Om_{\rm BB}(H)}(T_W)=r_{\Om_{\rm BB}(H)}(T_I).
\end{equation}
\end{proposition}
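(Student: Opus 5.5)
The plan is to verify the conjugacy identity pointwise on the cone, splitting into the zero state and nonzero states, and then to transfer the growth radius through norm equivalence.

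\emph{Zero state.} If $(u,v)=(0,0)$, both sides of \eqref{eq:BB-conjugacy} give $(0,0)$. Indeed, $\cS_W$ is linear and $T_W(0,0)=T_I(0,0)=(0,0)$.

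\emph{Nonzero states.} For $(u,v)\in\Om_{\rm BB}(H)$ with $(u,v)\neq(0,0)$ we have $u\ne0$, as noted after \eqref{eq:BB-map}, and hence $Su\ne0$ because $S\succ0$. The key computation is the step identity
\[
  \alpha_I(Su)=\frac{u^TS^TSu}{u^TS^THSu}=\frac{u^TWu}{u^TWHu}=\alpha_W(u).
\]
It uses $S^T=S$, $S^2=W$, and $SH=HS$; the last holds because $S=\omega(H)^{1/2}$ is a function of $H$. Then
\[
  T_I(Su,Sv)=\bigl(Sv,(I-\alpha_I(Su)H)Sv\bigr)=\bigl(Sv,S(I-\alpha_W(u)H)v\bigr)=\cS_WT_W(u,v),
\]
again using commutation of $S$ with $H$. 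Since $S$ is invertible and $S^{-1}$ also commutes with $H$, the map $\cS_W$ is a bijection of $\Om_{\rm BB}(H)$ onto itself, as stated before the proposition. This gives $T_W=\cS_W^{-1}T_I\cS_W$, and by induction $T_W^k=\cS_W^{-1}T_I^k\cS_W$ for all $k$.

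\emph{Growth radius.} Write $\Om=\Om_{\rm BB}(H)$. For $x=(u,v)\in\Sigma_\Om$, the product norm satisfies
\[
  \norm{\cS_Wx}_\times\le\norm{S}_2,
  \qquad
  \norm{\cS_W^{-1}y}_\times\le\norm{S^{-1}}_2\norm{y}_\times .
\]
Combining these with homogeneity of $T_I^k$ and the definition of $a_k$ gives
\[
  a_k(T_W)\le\norm{S^{-1}}_2\norm{S}_2\,a_k(T_I)=\kappa_2(W)^{1/2}a_k(T_I).
\]
The roles of $W$ and $I$ are symmetric, via $T_I=\cS_WT_W\cS_W^{-1}$, so we also get $a_k(T_I)\le\kappa_2(W)^{1/2}a_k(T_W)$. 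Taking $k$-th roots and letting $k\to\infty$ in \eqref{eq:growth-radius}, the constant factor $\kappa_2(W)^{1/2k}$ tends to $1$, and the two radii coincide.

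There is no real obstacle here. The step needing the most care is the normalization in the $a_k$ comparison: $\cS_Wx$ need not lie on $\Sigma_\Om$, so I would rescale it using positive homogeneity, handling the case where an iterate vanishes separately and trivially.
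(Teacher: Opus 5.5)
Your proposal is correct and follows the paper's proof essentially step for step. The shared steps are: the identity $\alpha_I(Su)=\alpha_W(u)$ from $S^2=W$ and $SH=HS$; multiplying both components of $T_W$ by $S$ and iterating to $T_W^k=\cS_W^{-1}T_I^k\cS_W$; and the two-sided bound on $a_k$ by $\norm{S}_2\norm{S^{-1}}_2=\sqrt{\kappa_2(W)}$ (the paper's $\kappa(\cS_W)$) via normalization and homogeneity, followed by taking $k$th roots.
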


\begin{proof}
For $u\ne0$,
\[
  \alpha_I(Su)
  =\frac{(Su)^T(Su)}{(Su)^TH(Su)}
  =\frac{u^TWu}{u^TWHu}
  =\alpha_W(u).
\]
Multiplying both components of \eqref{eq:BB-map} by $S$ proves
\eqref{eq:BB-conjugacy}. Iteration gives
$T_W^k=\cS_W^{-1}T_I^k\cS_W$. If $a_k(T)$ denotes
\eqref{eq:ak-definition} on $\Om_{\rm BB}(H)$ and
$\kappa(\cS_W)=\norm{S}_2\norm{S^{-1}}_2$, then
\begin{equation}\label{eq:BB-ak-conjugacy}
  \kappa(\cS_W)^{-1}a_k(T_I)
  \le a_k(T_W)
  \le \kappa(\cS_W)a_k(T_I).
\end{equation}
Indeed, the upper bound follows from the conjugacy and homogeneity
after normalizing $\cS_Wz$; the lower bound follows by interchanging
$T_W$ and $T_I$. Taking $k$th roots in
\eqref{eq:BB-ak-conjugacy} proves
\eqref{eq:BB-growth-radius-conjugate}.
\end{proof}

\subsection{State-uniform \texorpdfstring{$R$}{R}-linear convergence}

\begin{lemma}\label{lem:BB-realization}
Every state $(u,v)\in\Om_{\rm BB}(H)$ is the consecutive-gradient
state obtained after one admissible warm-up step.  Starting from this
state, its subsequent delayed weighted Rayleigh trajectory is exactly
the orbit of $T_W$.
\end{lemma}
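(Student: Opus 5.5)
The plan is to read the realization directly off the definition \eqref{eq:BB-cone}. Fix $(u,v)\in\Om_{\rm BB}(H)$ and choose $\alpha\in[b^{-1},a^{-1}]$ with $v=(I-\alpha H)u$. Set $g_0:=u$ and take the (positive, finite) first step $\alpha_0:=\alpha$. Then $g_1=(I-\alpha_0H)g_0=v$, so $(g_0,g_1)=(u,v)$ is the consecutive-gradient state produced by one admissible warm-up step. Any admissible $\alpha_0$ works here, which is the only freedom the recurrence \eqref{eq:weighted-BB-recurrence} allows before the delayed rule takes over. Non-uniqueness of $\alpha$ is harmless, since only $v$ enters later steps. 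Since $H$ is invertible, the iterate realizing $u$ is simply $x_0:=H^{-1}u$, so every compatible state arises from an actual run of the method on \eqref{eq:BB-quadratic}.

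Next I will show by induction on $k\ge0$ that $(g_k,g_{k+1})=T_W^k(u,v)$. The case $k=0$ is the construction above. For the inductive step, suppose $(g_k,g_{k+1})=T_W^k(u,v)$.

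\emph{If $g_k\ne0$:} the recurrence gives $\alpha_{k+1}=\alpha_W(g_k)$ and $g_{k+2}=(I-\alpha_W(g_k)H)g_{k+1}$. This is exactly the first branch of \eqref{eq:BB-map} applied to $(g_k,g_{k+1})$.

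\emph{If $g_k=0$:} the state lies in $\Om_{\rm BB}(H)$ by \cref{prop:BB-map}, and the only such state with vanishing first component is the origin, so $g_{k+1}=0$. The zero-continuation convention then gives $g_{k+2}=0$, which matches $T_W(0,0)=(0,0)$.

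The one point needing care is the termination case. Finite termination can occur with $g_k\ne0$ but $g_{k+1}=0$. The recurrence then still produces $g_{k+2}=(I-\alpha_W(g_k)H)\cdot0=0$, which agrees with $T_W(g_k,0)=(0,0)$, so the first branch already handles it. After that the state is the origin and the previous paragraph applies. Thus the delayed weighted Rayleigh trajectory started from $(u,v)$ coincides for all $k$ with the orbit of $T_W$, which proves the lemma. Consequently, for $W=I$, Raydan's theorem applied to the realized trajectory yields $T_I^k(u,v)\to0$ for every compatible state.
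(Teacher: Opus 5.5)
Your proposal is correct and follows essentially the paper's argument: it realizes $(u,v)$ via $x_0=H^{-1}u$ with first step $\alpha\in[b^{-1},a^{-1}]$, observes that each subsequent delayed step reproduces $T_W$, and uses zero continuation after termination; your explicit induction (including the $g_k\ne0$, $g_{k+1}=0$ case) just spells out more carefully the paper's remark that ``the same identity repeats.'' The only extra item in the paper is the secant check $s_0^Ts_0/s_0^Ty_0=\alpha_I(u)$, which certifies a genuine BB1 trajectory; on a quadratic this is exactly the identification \eqref{eq:intro-BB-steps} that you take for granted when you invoke Raydan's theorem.
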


\begin{proof}
If $u=0$, compatibility gives $v=0$, and the assertion follows from
the prescribed zero continuation. Hence assume $u\ne0$.
Choose $\alpha\in[b^{-1},a^{-1}]$ such that
$v=(I-\alpha H)u$.  With
$x_0=H^{-1}u$ and $x_1=H^{-1}v$, one has
\[
  x_1=x_0-\alpha u.
\]
Thus $u\mapsto v$ is a legal warm-up gradient step.  In the BB1 case,
if $s_0=x_1-x_0$ and $y_0=v-u$, then
\[
  s_0=-\alpha u,
  \qquad y_0=-\alpha Hu,
  \qquad
  \frac{s_0^Ts_0}{s_0^Ty_0}=\alpha_I(u),
\]
so the orbit is a genuine BB1 trajectory.  For a general fixed weight,
the next stepsize is $\alpha_W(u)$ by definition, and hence
\[
  (u,v)\mapsto
  \bigl(v,(I-\alpha_W(u)H)v\bigr)=T_W(u,v).
\]
The same identity repeats until termination, after which both the
algorithm and the map are continued by zeros.
\end{proof}

\begin{theorem}\label{thm:weighted-BB-rate}
Fix $H\succ0$.  There exist constants $C_H\ge1$ and
$\rho_H\in(0,1)$, depending only on $H$, such that every positive
spectral weight $W=\omega(H)$ satisfies
\begin{equation}\label{eq:weighted-BB-state-rate}
  \norm{T_W^k(u,v)}_\times
  \le C_H\sqrt{\kappa_2(W)}\,
       \rho_H^k\norm{(u,v)}_\times,
  \qquad
  (u,v)\in\Om_{\rm BB}(H),\quad k\ge0.
\end{equation}
This includes BB1, BB2, and every fixed power weight $W=H^\tau$,
$\tau\in\R$.
\end{theorem}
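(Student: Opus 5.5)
The plan is to prove the estimate for BB1 first, using \cref{thm:closed-cone}, and then move it to every weight with the conjugacy of \cref{prop:BB-conjugacy}.

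\emph{Step 1: pointwise convergence for $T_I$.} Fix $(u,v)\in\Om_{\rm BB}(H)$. By \cref{lem:BB-realization} with $W=I$, the orbit $T_I^k(u,v)$ is the sequence of consecutive-gradient pairs $(g_k,g_{k+1})$ of a genuine BB1 run. That run starts at $x_0=H^{-1}u$ with an admissible first step $\alpha\in[b^{-1},a^{-1}]$. Raydan's theorem \cite{Raydan1993} gives $g_k\to0$ for every BB1 trajectory on a strictly convex quadratic, with any positive finite first step. If the run terminates finitely, the zero continuation settles the matter. Hence both components of $T_I^k(u,v)$ tend to zero, and this holds for every state in the cone.

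\emph{Step 2: uniform rate for $T_I$.} \Cref{prop:BB-map} shows that $\Om_{\rm BB}(H)$ is a closed cone and that $T_I$ is a continuous positively homogeneous self-map of it. Applying (i)$\Rightarrow$(ii) of \cref{thm:closed-cone} gives constants $C_H\ge1$ and $\rho_H\in(0,1)$ such that
\[
  \norm{T_I^k(z)}_\times\le C_H\rho_H^k\norm{z}_\times
  \qquad\text{for all } z\in\Om_{\rm BB}(H),\ k\ge0.
\]
These constants depend only on $H$, since $T_I$ and the cone are built from $H$ alone.

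\emph{Step 3: transfer to a general weight.} Let $W=\omega(H)$ and $S=W^{1/2}$. By \cref{prop:BB-conjugacy}, $T_W^k=\cS_W^{-1}T_I^k\cS_W$, and $\cS_W$ preserves the cone. Therefore
\[
  \norm{T_W^k(z)}_\times
  \le\norm{S^{-1}}_2\,C_H\rho_H^k\,\norm{S}_2\,\norm{z}_\times .
\]
Because $S$ is symmetric positive definite, $\norm{S}_2\norm{S^{-1}}_2=\sqrt{\norm{W}_2\norm{W^{-1}}_2}=\sqrt{\kappa_2(W)}$. This is exactly \eqref{eq:weighted-BB-state-rate}.

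\emph{Special cases.} BB1 and BB2 correspond to $W=I$ and $W=H$. For $\tau\in\R$, the power weight $W=H^\tau$ is a positive spectral weight with $\omega(\lambda)=\lambda^\tau>0$ on $\sigma(H)$, so it is covered as well.

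The only step needing care is Step 1. One must check that Raydan's theorem applies to trajectories started from an arbitrary compatible state. The step from $u$ to $v$ has to count as a legitimate first step, and the realization lemma settles exactly this point. Once that is granted, the uniformity comes entirely from the compactness principle.
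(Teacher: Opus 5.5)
Your proposal is correct and follows the paper's proof essentially step by step. Raydan's theorem, combined with the realization lemma, gives pointwise convergence of every $T_I$-orbit on $\Om_{\rm BB}(H)$. \Cref{thm:closed-cone}, via \cref{prop:BB-map}, upgrades this to a uniform BB1 rate with constants depending only on $H$. The conjugacy $T_W^k=\cS_W^{-1}T_I^k\cS_W$, with product-norm operator norms $\norm{S}_2$ and $\norm{S^{-1}}_2$, then yields the factor $\sqrt{\kappa_2(W)}$.
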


\begin{proof}
Raydan's convergence theorem shows that every BB1 trajectory on a
strictly convex quadratic either terminates or has gradients tending
to zero \cite{Raydan1993}.  By Lemma~\ref{lem:BB-realization}, every
$T_I$-orbit on $\Om_{\rm BB}(H)$ is such a trajectory.  Hence
$T_I^kz\to0$ for every $z\in\Om_{\rm BB}(H)$, and
Proposition~\ref{prop:BB-map} and \cref{thm:closed-cone} give constants
$C_H\ge1$ and $\rho_H\in(0,1)$, depending only on $H$, for which
\eqref{eq:weighted-BB-state-rate} holds when $W=I$.

For general $W$, Proposition~\ref{prop:BB-conjugacy} gives
$T_W^k=\cS_W^{-1}T_I^k\cS_W$.  The operator norms of $\cS_W$ and
$\cS_W^{-1}$ for the product norm are $\norm{S}_2$ and
$\norm{S^{-1}}_2$, respectively.  Hence
\[
  \norm{T_W^kz}_\times
  \le\norm{S^{-1}}_2\,
      C_H\rho_H^k
      \norm{\cS_W z}_\times
  \le C_H\norm{S^{-1}}_2\norm{S}_2
      \rho_H^k\norm{z}_\times.
\]
Since $S=W^{1/2}$,
\[
  \norm{S^{-1}}_2\norm{S}_2
  =\sqrt{\kappa_2(W)}.
\]
For $W=H^\tau$ the matrix is positive definite for every real $\tau$.
\end{proof}

\begin{corollary}\label{cor:classical-BB}
Consider BB1, BB2, or a fixed weighted rule
\eqref{eq:weighted-BB-recurrence}, with an arbitrary positive finite
first stepsize and zero continuation after termination. Then
\begin{equation}\label{eq:actual-BB-rate}
  \norm{(g_k,g_{k+1})}_\times
  \le C_H\sqrt{\kappa_2(W)}\,
       \rho_H^{\,k-1}
       \norm{(g_1,g_2)}_\times,
  \qquad k\ge1.
\end{equation}
Consequently,
\begin{align}
  \norm{x_k-x_*}
  &\le a^{-1}C_H\sqrt{\kappa_2(W)}\,
       \rho_H^{\,k-1}
       \norm{(g_1,g_2)}_\times,\label{eq:BB-error-rate}\\
  0\le f(x_k)-f(x_*)
  &\le\frac{C_H^2\kappa_2(W)}{2a}\,
       \rho_H^{\,2(k-1)}
       \norm{(g_1,g_2)}_\times^2.\label{eq:BB-gap-rate}
\end{align}
\end{corollary}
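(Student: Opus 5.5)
The plan is to identify the actual iterates with an orbit of $T_W$ started at $(g_1,g_2)$, and then read off the three estimates from \cref{thm:weighted-BB-rate}.

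\textbf{Step 1: identify the orbit.} After the warm-up step $g_1=(I-\alpha_0H)g_0$, the second gradient is $g_2=(I-\alpha_W(g_0)H)g_1$ when $g_0\ne0$. By \eqref{eq:weighted-step-bounds}, $\alpha_W(g_0)\in[b^{-1},a^{-1}]$, so $(g_1,g_2)\in\Om_{\rm BB}(H)$. This holds even though $\alpha_0$ is arbitrary, because compatibility only involves the stepsize used between $g_1$ and $g_2$. I will check the degenerate cases separately.
\begin{itemize}
\item If $g_0=0$ or $g_1=0$, zero continuation gives $g_j=0$ for $j\ge1$ or $j\ge2$ respectively. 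Then $(g_1,g_2)$ is $(0,0)$, or $(g_1,0)$ with $g_1=0$, so the pair is again $(0,0)$.
\item Otherwise the recurrence \eqref{eq:weighted-BB-recurrence} gives $g_{k+2}=(I-\alpha_W(g_k)H)g_{k+1}$. Hence $(g_{k+1},g_{k+2})=T_W(g_k,g_{k+1})$, which is exactly \cref{lem:BB-realization}.
\end{itemize}
By induction, $(g_k,g_{k+1})=T_W^{k-1}(g_1,g_2)$ for $k\ge1$. Applying \eqref{eq:weighted-BB-state-rate} with exponent $k-1$ then gives \eqref{eq:actual-BB-rate}. The main point of care is this index bookkeeping, together with the convention at termination, where a zero $u$ forces $v=0$ inside the cone.

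\textbf{Step 2: iterate error.} Translate $x_*$ to the origin, so that $x_k-x_*=H^{-1}g_k$. Then
\[
\norm{x_k-x_*}\le a^{-1}\norm{g_k}\le a^{-1}\norm{(g_k,g_{k+1})}_\times,
\]
and inserting \eqref{eq:actual-BB-rate} gives \eqref{eq:BB-error-rate}.

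\textbf{Step 3: objective gap.} We have
\[
f(x_k)-f(x_*)=\tfrac12 g_k^TH^{-1}g_k\le \tfrac{1}{2a}\norm{g_k}^2.
\]
Squaring the bound from Step 1 yields \eqref{eq:BB-gap-rate}, and nonnegativity of the gap follows from $H\succ0$.

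I expect no real obstacle. The only delicate point is confirming that $(g_1,g_2)$ lies in the cone for an arbitrary positive $\alpha_0$, possibly outside $[b^{-1},a^{-1}]$. This works because the cone constraint concerns only the step from $g_1$ to $g_2$, and that step is a weighted Rayleigh step, which always lies in the admissible interval.
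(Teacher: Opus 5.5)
Your proposal is correct and follows the paper's proof essentially step for step. You place $(g_1,g_2)$ in $\Om_{\rm BB}(H)$ by applying \eqref{eq:weighted-step-bounds} to $\alpha_W(g_0)$, independently of $\alpha_0$, and identify $(g_k,g_{k+1})=T_W^{k-1}(g_1,g_2)$ using zero continuation for the degenerate and terminating cases; you then invoke \cref{thm:weighted-BB-rate} and convert via $\norm{x-x_*}\le a^{-1}\norm{g}$ and $f(x)-f(x_*)=\tfrac12 g^TH^{-1}g\le\tfrac1{2a}\norm{g}^2$, with your index and termination bookkeeping slightly more explicit than the paper's.
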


\begin{proof}
If $g_0=0$ or $g_1=0$, the conclusion is immediate under zero
continuation. Otherwise the first computed weighted Rayleigh step is
$\alpha_1=\alpha_W(g_0)\in[b^{-1},a^{-1}]$. Hence
$(g_1,g_2)\in\Om_{\rm BB}(H)$ even when the arbitrary first step that
produced $g_1$ does not. Moreover,
\[
  (g_k,g_{k+1})=T_W^{k-1}(g_1,g_2),
  \qquad k\ge1.
\]
Applying \cref{thm:weighted-BB-rate} yields \eqref{eq:actual-BB-rate}.
The remaining estimates follow from
\[
  \norm{x-x_*}\le a^{-1}\norm{g(x)}
\]
and
\[
  f(x)-f(x_*)
  =\frac12g(x)^TH^{-1}g(x)
  \le\frac1{2a}\norm{g(x)}^2.
\]
\end{proof}

\section{Conclusion}\label{sec:conclusion}

The closed-cone construction separates the algorithm-specific input,
pointwise convergence from every compatible state, from a geometric
rate-upgrade step.  For BB1, Raydan's convergence theorem supplies the
input; compactness and homogeneity yield the uniform $R$-linear rate.
Spectral conjugacy then transfers the rate to BB2 and every fixed
positive spectral weight while preserving the homogeneous growth
radius.

The same proof architecture applies to a finite-memory spectral
recurrence whenever its realizable states form a finite-dimensional
closed invariant cone, the induced transition is continuous and
degree-one positively homogeneous, and pointwise stability holds from
every cone state.  The central task is therefore the construction of
the compatible state space.  For the classical BB rules, a sharper
evaluation of the common growth radius is developed in
\cite{YangYuanSharp2026}.

\section*{Acknowledgements}
Shutai Yang thanks Shixiang Chen for supervising his undergraduate thesis at the University of Science and Technology of China, and Xiaowei Xu for supervising his project under the National College Students Innovation and Entrepreneurship Training Program at the same university.  The present article grew out of those two undergraduate projects.

\section*{Declarations}

\paragraph{Funding.}
The work of Shutai Yang was supported by the National College Students Innovation and Entrepreneurship Training Program, administered by the University of Science and Technology of China (Project No.~202510358091).  The work of Ya-xiang Yuan was supported by the National Natural Science Foundation of China (Grant No.~12288201).

\paragraph{Competing interests.}
The authors have no relevant financial or non-financial interests to disclose.

\paragraph{Author contributions.}
Shutai Yang conceived the study, developed the mathematical results, carried out the analysis, and wrote the original draft.  Ya-xiang Yuan provided guidance on the research direction and reviewed the manuscript.  Both authors read and approved the final manuscript.

\paragraph{Declaration on the use of generative AI.}
During the preparation of this manuscript, Shutai Yang used OpenAI language models for language and \LaTeX{} editing, and the presentation of mathematical notation.  He independently checked and revised all model-assisted material.  Both authors reviewed and approved the final manuscript and take full responsibility for its content.

\paragraph{Data availability.}
No datasets were generated or analyzed during the current study.

\paragraph{Code availability.}
No computational code was generated or used in this entirely theoretical study.


\begin{thebibliography}{99}
\small
\setlength{\itemsep}{0.35em}

\bibitem{BarzilaiBorwein1988}
J.~Barzilai and J.~M. Borwein.
Two-point step size gradient methods.
\emph{IMA Journal of Numerical Analysis}, 8(1):141--148, 1988.
\doi{10.1093/imanum/8.1.141}.

\bibitem{Dai2003}
Y.-H.~Dai.
Alternate step gradient method.
\emph{Optimization}, 52(4--5):395--415, 2003.
\doi{10.1080/02331930310001611547}.

\bibitem{DaiLiao2002}
Y.-H.~Dai and L.-Z.~Liao.
$R$-linear convergence of the Barzilai and Borwein gradient method.
\emph{IMA Journal of Numerical Analysis}, 22(1):1--10, 2002.
\doi{10.1093/imanum/22.1.1}.

\bibitem{FriedlanderMartinezMolinaRaydan1999}
A.~Friedlander, J.~M. Mart\'inez, B.~Molina, and M.~Raydan.
Gradient method with retards and generalizations.
\emph{SIAM Journal on Numerical Analysis}, 36(1):275--289, 1999.
\doi{10.1137/S003614299427315X}.

\bibitem{Raydan1993}
M.~Raydan.
On the Barzilai and Borwein choice of steplength for the gradient method.
\emph{IMA Journal of Numerical Analysis}, 13(3):321--326, 1993.
\doi{10.1093/imanum/13.3.321}.

\bibitem{ShenHu2012}
J.~Shen and J.~Hu.
Stability of discrete-time switched homogeneous systems on cones and conewise homogeneous inclusions.
\emph{SIAM Journal on Control and Optimization}, 50(4):2216--2253, 2012.
\doi{10.1137/110845215}.

\bibitem{Tuna2008}
S.~E. Tuna.
Growth rate of switched homogeneous systems.
\emph{Automatica}, 44(11):2857--2862, 2008.
\doi{10.1016/j.automatica.2008.03.017}.

\bibitem{YangYuanSharp2026}
S.~Yang and Y.-X.~Yuan.
The sharp worst-case asymptotic rate of the Barzilai--Borwein method in $\mathbb R^d$ and Hilbert spaces.
Preprint, \href{https://arxiv.org/abs/2608.07839}{arXiv:2608.07839}, 2026. 
\doi{10.48550/arXiv.2608.07839}.

\end{thebibliography}
\end{document}